\renewcommand{\le}{\leq}
\renewcommand{\ge}{\geq}
\newcommand{\undersim}[1]{\stackMath\stackunder[-0.2pt]{\hspace{7pt}\sim\hspace{7pt}}{\scalebox{0.6}[0.6]{$#1$}}}
\newcommand{\simminl}{\undersim{\phantom{\ }\mathrm{min},l}}
\newcommand{\minus}{\raisebox{0.2ex}{\scalebox{0.6}[0.6]{{\boldmath$-$}}}}
\newcommand{\plus}{\raisebox{0.2ex}{\scalebox{0.6}[0.6]{{\boldmath$+$}}}}
\newcommand{\sminus}{\scalebox{0.5}[0.5]{{\boldmath$-$}}}
\newcommand{\splus}{\scalebox{0.5}[0.5]{{\boldmath$+$}}}
\newcommand{\alignb}{\[\begin{aligned} }
\newcommand{\alignn}{ \end{aligned}\]}
\DeclareMathOperator{\lcm}{lcm}
\DeclareMathOperator{\gap}{gap}
\newcommand\mL{L\kern-0.08cm\char39}
\newcommand{\mywith}{\text{ with }}
\newcommand{\myforsome}{\text{ for some }}
\newcommand{\myand}{\text{ and }}
\newcommand{\seb}{\{\,}
\newcommand{\sen}{\,\}}
\newcommand{\kuu}{\emptyset}
\newcommand{\nekuu}{\neq \kuu}
\newcommand{\iskuu}{= \kuu}
\newcommand{\ew}{\epsilon}
\newcommand{\fai}{\varphi}
\newcommand{\Lett}{\operatorname{Lett}}
\newcommand{\pdirectional}{\raise0.05em\hbox{$+$}directional}
\newcommand{\pdirectionality}{\raise0.05em\hbox{$+$}directionality}
\newcommand{\pdirectionalitys}{\raise0.05em\hbox{$+$}directionality }
\newcommand{\pdirectionals}{\raise0.05em\hbox{$+$}directional }
\newcommand{\mdirectional}{\raise0.05em\hbox{$-$}directional}
\newcommand{\mdirectionality}{\raise0.05em\hbox{$-$}directionality}
\newcommand{\mdirectionalitys}{\raise0.05em\hbox{$-$}directionality }
\newcommand{\mdirectionals}{\raise0.05em\hbox{$-$}directional }
\newcommand{\Z}{\mathbb{Z}}
\newcommand{\bi}{\in \Z}
\newcommand{\bpi}{\ge 1}
\newcommand{\sL}{\mathscr{L}}
\newcommand{\sM}{\mathscr{M}}
\newcommand{\sS}{\mathscr{S}}
\newcommand{\centb}{\begin{center}}
\newcommand{\centn}{\end{center}}
\newcommand{\enumb}{\begin{enumerate}}
\newcommand{\enumn}{\end{enumerate}}
\newcommand{\itemb}{\begin{itemize}}
\newcommand{\itemn}{\end{itemize}}
\numberwithin{equation}{section}
\setlist[enumerate,1]{label=(\alph*),ref=(\alph*)}
\setlist[enumerate,2]{label=(\arabic*),ref=(\alph{enumi}-\arabic{enumii})}
\setlist[enumerate,3]{label=(\Alph*),ref=(\roman{enumi}-\alph{enumii}-\Alph*)}
\setlist[enumerate,4]{label=(\arabic*),ref=(\roman{enumi}-\alph{enumii}-\Alph{enumiii}-\arabic*)}
\newlist{enumrm}{enumerate}{1}
\setlist[enumrm,1]{label={\rm (\roman*)},ref=(\roman*)}
\newcommand{\enumrmb}{\begin{enumrm}}
\newcommand{\enumrmn}{\end{enumrm}}
\newlist{enumsec}{enumerate}{1}
\setlist[enumsec,1]{label={\rm (\thesection.\alph*)},ref=(\thesection.\alph*)}
\newcommand{\enumsecb}{\begin{enumsec}[resume]}
\newcommand{\enumsecn}{\end{enumsec}}
\newlist{deepenum}{enumerate}{1}
\setlist[deepenum,1]{label=($1$:\alph*),ref=(1:\alph*)}
\newlist{Lminusoneenum}{enumerate}{1}
\setlist[Lminusoneenum,1]{label=($-1$:\alph*),ref=($-1$:\alph*)}
\newlist{Ldeepenum}{enumerate}{1}
\setlist[Ldeepenum,1]{label=($2$:\alph*),ref=($2$:\alph*)}
\numberwithin{equation}{section}
\newtheorem{thm}[equation]{Theorem}
\newtheorem{lem}[equation]{Lemma}
\newtheorem{prop}[equation]{Proposition}
\newtheorem{cor}[equation]{Corollary}
\newtheorem{mainthm}{Theorem}
\theoremstyle{definition}
\newtheorem{defn}[equation]{Definition}
\newtheorem{example}[equation]{Example}
\theoremstyle{remark}
\newtheorem{nota}[equation]{Notation}
\newtheorem{rem}[equation]{Remark}
\crefname{sec}{\S}{\S\S}
\crefname{mainthm}{Theorem}{Theorems}
\crefname{thm}{Theorem}{Theorems}
\crefname{lem}{Lemma}{Lemmas}
\crefname{prop}{Proposition}{Propositions}
\crefname{cor}{Corollary}{Corollaries}
\crefname{defn}{Definition}{Definitions}
\crefname{conj}{Conjecture}{Conjectures}
\crefname{example}{Example}{Examples}
\crefname{nota}{Notation}{Notations}
\crefname{rem}{Remark}{Remarks}
\crefname{note}{Note}{Notes}
\crefname{case}{Case}{Cases}
\crefname{figure}{Figure}{Figures}
\crefname{section}{\S}{\S\S}
\crefname{enumi}{}{}
\crefname{enumii}{}{}
\crefname{equation}{}{}
\newcommand{\abs}[1]{\lvert#1\rvert}
\begin{document}
  
\title[A Simple Approach to substitution]{A simple approach to substitution\\
 minimal subshifts}

\author{Takashi Shimomura}
\address{Nagoya University of Economics, Uchikubo 61-1, Inuyama 484-8504, Japan}
\email{tkshimo@nagoya-ku.ac.jp}

\thanks{}

\subjclass[2010]{Primary 37B10, 54H20. }

\keywords{substitution, subshift, dynamical system}

\date{\today}



\begin{abstract}
In the study of substitution minimal subshifts,
 some complicated trivialities have hindered
 simple and general approaches.
Recently, Maloney and Rust introduced the term ``tame,''
 simplifying the study.
We introduce another term ``$l$-primitive'' for the substitutions and
 show that the combination of these two conditions can characterize
 the minimality of substitution subshifts.
We shall show that
 all substitution minimal subshifts can be generated by 
 substitutions that satisfy both conditions;
 conversely, all substitutions that satisfy the two conditions always
 generate minimal subshifts.
As an application, we show that the result by Damanik and Lenz that
 an admissible substitution subshift is minimal
 if and only if it is linearly repetitive is valid
 for all substitution subshifts.
The above set of conditions can be checked by finite calculations (algorithms).
\end{abstract}

\maketitle

\section{Introduction}\label{sec:introduction}
The study of minimal substitution subshifts has an extensive history
 (see, for example,
 \cite{Fogg_2002SubstitutionsInDynamicsArithmeticsAndCombinatorics}).
Historically, minimal substitutions are discussed in the context of
 primitive substitutions (see, for example,
\cite{Gottschalk1963SubstitutionMinimalSets,Martin_1971SubstitutionMinimalFlows,CovenKeane1971StructureOfSubstitutionMinimalSets,Martin1973MinimalFlowsArisingFromSubstitutionsOfNonConstantLength,Queffelec1987SubstitutionDynamicalSystemsSpectralAnalyss,Mosse1992PuissanceDeMotsEtReconnaissabiliteDesPointsFixes}).
However, many minimal substitution subshifts are generated by nonprimitive
 substitutions.
In \cite{de2002SingularContinuousSpectrumForAClssOfNonprimitiveSubstitutionSchrodingerOperators}, de Oliveira and Lima investigated a nonprimitive and yet minimal substitution subshift on two symbols.
The relation between the primitivity and the minimality is very subtle in 
 the theory of substitution subshifts.
Additionally, the minimality is an abstract notion
 in the theory of topological dynamical systems.
In this paper, we show that, in the theory of substitution subshifts,
 there exists a concrete characterization of the minimality
 (see \cref{thm:main2}).
However,
 there remains a profound question of
 whether or not all of the substitution minimal subshifts
 can be generated by primitive substitutions.
In \cite[\S 6.2]{Durand1998ACharacterizationOfSubstitutiveSequencesUsingReturnWords}, Durand first argued this.
Later, Durand \cite[Theorem 3]{Durand_2013DecidabilityOfUniformRecurrenceOfMorphicSequences} has given, in the extended framework of morphic sequences,
 a partial answer to this question, stating that 
 uniformly recurrent morphic sequences are primitive substitutive sequences.
Owing to this result, after a sketch of the proof in the one-sided infinite case 
 that had been made in \cite[\S 6.2]{Durand1998ACharacterizationOfSubstitutiveSequencesUsingReturnWords}, Maloney and Rust \cite[Theorem 2.1]{Maloney_Rust_2016BeyondPrimitivityForOneDimensionalSubstitutionSubshiftsAndTilingSpaces},
 have given a complete proof of the fact that
 for a minimal substitution $\fai$
 with the nonempty minimal subshift $X_{\fai}$,
  there exist an alphabet $Z$ and a primitive substitution $\theta$ on $Z$
  such that $X_{\theta}$ is topologically conjugate to $X_{\fai}$.
However, the conjugating maps change the symbolic sequences
 drastically.
Therefore, even now, we \textbf{cannot} study
 all substitution minimal subshifts
 as primitive substitution subshifts. 

Damanik and Lenz in \cite{Damanik2006SubstitutionDynSysCharactOfLinearRepetAndApplications} extended some results from the framework of primitive substitutions to the framework of substitution minimal subshifts.
In \cite[Theorem 1]{Damanik2006SubstitutionDynSysCharactOfLinearRepetAndApplications},
 they had shown that for admissible substitution subshifts,
 the minimality is equivalent to the linear repetitivity. 
We think that the characterization of substitutions
 that generate substitution minimal subshifts
 is becoming important.
The outline of our answer is that
 every substitution minimal subshift can be generated
 by a substitution that satisfies the combination of two conditions
 that are stated below;
 conversely, substitutions with a set of
 two conditions always generate
 substitution minimal subshifts (see \cref{thm:main2}).
One of the two conditions is the tameness that had been discussed in \cite{Maloney_Rust_2016BeyondPrimitivityForOneDimensionalSubstitutionSubshiftsAndTilingSpaces}.
They stated that in the presence of tameness, most of the
 possible pathological behaviors of nonminimal substitutions cannot occur.
Another condition we propose is the $l$-primitivity
 that is defined later in this section.
Although both conditions have been discussed in many works
 perhaps without naming them,
 there exist no clear statements that the combination of the two conditions,
 tameness and $l$-primitivity, ``are equal'' to the minimality.
In fact, we can say that
 for every substitution minimal subshift $X$,
 there exists a tame and $l$-primitive substitution that generates $X$
 (see \cref{thm:main2}).
We think that some complicated trivialities
 that have hindered a simple and general approach to substitution minimal
 subshifts have been removed.
As an example, we can show that 
 the coincidence of the minimality and linear repetitivity that had been 
 shown in the case of admissible substitution subshifts by
 Damanik and Lenz in \cite[Theorem 1]{Damanik2006SubstitutionDynSysCharactOfLinearRepetAndApplications} is valid for all substitution subshifts
 (see \cref{cor:DL_my_way}).

Hereon, we discuss concretely.
Let $\Z$ be the set of all integers and $A$ be
 a nonempty finite set that is called an \textit{alphabet}.
We exclude the case in which $A$ consists of a single element.
Each element $a \in A$ is called a \textit{letter}.
A finite sequence $w = u_1 u_2 \dotsb u_k$ $(u_i \in A, i=1,2,\dotsc,k)$
 is called a \textit{word}, and its \textit{length} $k$ is denoted as $\abs{w}$.
Let $A^+$ be the set of all words with a positive finite length.
In addition, we define an \textit{empty word} $\ew$
 with length $\abs{\ew} = 0$.
We define the set $A^* := A^+ \cup \seb \ew \sen$.
For $u, v \in A^*$, we define the concatenation of words as $u v$.
It follows that $\abs{a} = 1$ for all $a \in A$, and 
 $\abs{uv} = \abs{u}+ \abs{v}$ for all $u, v \in A^*$.
Suppose that there exist words $u,u',v,w \in A^*$ that satisfy
 $w = u v u'$.
Then, $v$ is said to be a \textit{subword} or a \textit{factor} of $w$.
Let $\sigma : A \to A^+$ be a map.
Then, we can extend the map $\sigma$ as $\sigma : A^* \to A^*$, i.e.,
 $\sigma(u_1 u_2, \dotsc, u_k) = \sigma(u_1) \sigma(u_2) \dotsc \sigma(u_k)$
 for $u_i \in A$ $(1 \le i \le k)$ and $\sigma(\ew) = \ew$.
For a map $\sigma : A \to A^+$, we define $A_l := \seb a \in A \mid \lim_{n \to \infty}\abs{\sigma^n(a)} = \infty \sen$ and $A_s := A \setminus A_l$.
We define the set $\sL(\sigma) \subseteq A^*$ that consists of all words $v$
 that are subwords of $\sigma^n(a)$ for some $a \in A$ and $n \bpi$.
We say that $\sL(\sigma)$ is the \textit{language} of $\sigma$.
Let $A^{\Z}$ be the set of all two-sided sequences of the letters.
For each $x \in A^{\Z}$ and every finite interval $[s,t]$
 of integers with $s < t$ $(s,t \in \Z)$,
 we consider a finite \textit{block} or a word as
 $x[s,t] := x(s) x(s+1) \dotsb x(t) \in A^+$.
We denote $\sL(x) := \seb x[s,t] \mid s < t,\ s,t \bi \sen$.
For each $v \in \sL(x)$, we also write $v \prec x$.
We assume that $\ew \in \sL(x)$.
For a subset $X$ of $A^{\Z}$,
 we denote $\sL(X) := \bigcup_{x \in X}\sL(x)$.
We define the \textit{two-sided full shift over} $A$ as
 the pair $(A^{\Z},T)$, where $T : A^{\Z} \to A^{\Z}$ is the left shift.
For a subset $\Lambda \subseteq A^{\Z}$, we say that $(\Lambda,T|_\Lambda)$
 is a \textit{subshift} if 
 it is closed and satisfies $T(\Lambda) = \Lambda$.
We write $(\Lambda,T)$ instead of $(\Lambda,T|_\Lambda)$
We define a subshift $(X_{\sigma},T)$ of $(A^{\Z},T)$
 as
\begin{ceqn}\begin{align*}
X_{\sigma}
 := \seb x \in A^{\Z}
 \mid x[s,t] \in \sL(\sigma) \text{ for all integers } s < t \sen.
\end{align*}‎\end{ceqn}

For a map $\sigma : A \to A^+$, we say that a letter $b \in A$ is
 \textit{isolated} if $b \in A_s$ and
 $b$ does not appear in any $\sigma^n(a)$ $(a \in A_l, n \bpi)$.
We denote the set of all isolated letters as $A_{\text{iso}}$.
We say that $\sigma$ is a \textit{substitution}
 if $A_l \nekuu$ and $A_{\text{iso}} \iskuu$.
We have excluded the case in which $A_l \iskuu$.
This is a necessary exclusion if we need infinitely many words with the form
 $\sigma^n(a)$ $(a \in A, n \bpi)$.
If $A_l \iskuu$, then it follows that $X_{\sigma} \iskuu$. 
We note that we have imposed the new condition $A_{\text{iso}} \iskuu$.
This removes a small portion of the trivial hindrances
 in starting the study of general
 substitution minimal subshifts.
This is not a strong restriction if we consider
 only the phenomena of the words $\sigma^n(a)$ $(a \in A_l, n \bpi)$.
Letters in $A_{\text{iso}}$ do not appear
 in the words $\sigma^n(a)$ $(a \in A_l, n \bpi)$
 nor in the elements of $X_{\sigma}$.
Let $\sigma : A \to A^+$ be a substitution.
Then, it always follows that
 $\sL(X_{\sigma}) \subseteq \sL(\sigma)$.
However, the converse inclusion does not hold always
 (see \cref{example:non-admissible}).
The substitution $\sigma$ that satisfies $\sL(X_{\sigma}) = \sL(\sigma)$
 is said to be
 \textit{admissible}
 (see \cite{Maloney_Rust_2016BeyondPrimitivityForOneDimensionalSubstitutionSubshiftsAndTilingSpaces}).
We do not exclude the case in which $A_l$ consists of a single element.
In some contexts of important studies, it is often assumed that $A = A_l$ or
 $A_s \iskuu$.
However, after \cite{DURAND_1999SubstDynSysBratteliDiagDimGroup,Lagarias_Pleasants_2003},
 Damanik and Lenz \cite{Damanik2006SubstitutionDynSysCharactOfLinearRepetAndApplications} raised the importance of the study of general substitution minimal
 subshifts without the assumption that $A_s \iskuu$.

Let $\sigma : A \to A^+$ be a substitution.
We consider the next condition:
\begin{align}\label{item:l-primitive}
 \text{There exists an } n \bpi \text{ such that for all } a, b \in A_l,
 a \text{ is a letter in } \sigma^n(b).
\end{align}
In the case that $A_s \iskuu$, the substitution $\sigma$ that satisfies
 condition \cref{item:l-primitive} has been called \textit{primitive}
 and has been studied a lot (see, for example, the aforementioned studies).
In addition, without the assumption that $A_s \iskuu$, 
 this condition has been studied extensively,
 especially in \cite{Durand1998ACharacterizationOfSubstitutiveSequencesUsingReturnWords,Durand_2013DecidabilityOfUniformRecurrenceOfMorphicSequences}.
However, the fact that this condition with another condition
 that is stated later in this section
 can characterize the minimality of
 all substitution subshifts has not been clearly stated.
We say that a substitution $\sigma$ is \textit{$l$-primitive}
 if it satisfies condition \cref{item:l-primitive}.

Another condition that we introduce is due to Maloney and Rust \cite{Maloney_Rust_2016BeyondPrimitivityForOneDimensionalSubstitutionSubshiftsAndTilingSpaces}.
From their work, we use the following definitions.
A letter $a \in A_l$ is \textit{left-isolated} if
 there exists an $n \bpi$ such that
 $\sigma^n(a) =  l_s(a)\ a\ w_{\plus}(a)$ with
 $l_s(a) \in {A_s}^+$ and $w_{\plus}(a) \in A^*$.
 Further, a letter $a \in A_l$ is \textit{right-isolated} if
 there exists an $n \bpi$ such that
 $\sigma^n(a) =  w_{\minus}(a)\ a\ r_s(a)$ with
 $r_s(a) \in {A_s}^+$ and $w_{\minus}(a) \in A^*$.
We can consider the extended $\sigma : A^{\Z} \to A^{\Z}$ on infinite words
 such that $\sigma(x)$ is defined as
 $\cdots \sigma(x(-1)). \sigma(x(0)) \sigma(x(1)) \cdots $,
 in which the nearest right side of the dot ``$\ .\ $''
 is the position of $(\sigma(x))(0)$.
We use the term ``isolated.'' To explain the reason, think of an occurrence of a left-isolated letter $a \in A_l$ that appears in an $x \in A^{\Z}$,
and apply $\sigma^{in}(x)$ $(i = 1,2,\dotsc)$.
This occurrence of $a$ will be isolated from all of the occurrences of
 the letters in $A_l$
 on the left side of the occurrence of $a$.
We propose the following condition:
\begin{align}\label{item:not-isolated}
 \text{No letter in } A_l \text{ is left-isolated, and no letter in } A_l \text{ is right-isolated}.
\end{align}
This condition has also been discussed in previous studies.
Pansiot discussed this condition in \cite{Pansiot1984ComplexiteDesFacteursDesMotsInfinisEngendresParMorphismesIteres}.
Maloney and Rust named this as ``tame'' in
\cite{Maloney_Rust_2016BeyondPrimitivityForOneDimensionalSubstitutionSubshiftsAndTilingSpaces}, in contrast to the notion of wildness
 in their study of substitution subshifts.
A substitution is \textit{tame} if it satisfies
 condition \cref{item:not-isolated}.
Our definition of tameness is equivalent to that by Maloney and Rust.
We obtain
\begin{mainthm}\label{thm:main}
Let $\sigma : A \to A^+$ be a substitution.
Suppose that $\sigma$ is tame and $l$-primitive.
Then, $(X_{\sigma},T)$ is minimal.
Conversely,
 suppose that $(X_{\sigma},T)$ is minimal and is not a single periodic orbit.
Then, the substitution $\sigma$ is tame, and
 there exists a subalphabet $B \subseteq A$
 and the restriction $\sigma|_B : B \to B^+$
 such that $\sigma|_B$ is tame and $l$-primitive
 and $(X_{\sigma},T) = (X_{\sigma|_B},T)$.
\end{mainthm}

The first half of the theorem is well-known, and the last half
 has an easy proof with some knowledge of the theory of nonnegative matrices
 (see, for example, \cite[Proposition 2.65]{Rigo2014FormalLanguagesAutomataAndNumerationSystems1}).
However, in this paper, we propose other self-contained proofs for the
 ease of the reader.
We only assume \cref{prop:minimal-subshifts}
 as a basic fact about minimal subshifts.
From the above theorem, we can obtain the following result:

\begin{mainthm}\label{thm:main2}
Let $\sM$ be the class of all substitution minimal subshifts.
Let $\sM'$ be the class of all $(X_{\sigma},\sigma)$
 such that $\sigma$ is a substitution that is tame and $l$-primitive.
Then, it follows that $\sM = \sM'$.
\end{mainthm}

Thus, if we need to consider substitution minimal subshifts,
 we can always assume that they are concretely generated
 by tame and $l$-primitive substitutions.
Note that we have not assumed any conditions for the substitutions
 other than $A_l \nekuu$ and $A_{\text{iso}} \iskuu$.
Our condition for minimality is the combination of
 tameness and $l$-primitivity.
It is easy to see that these conditions can be checked by finite calculations (see \cref{rem:check-not-isolated,rem:calculation-primitive-Al}).  
We note that,
 in \cite[Theorem 1]{Durand_2013DecidabilityOfUniformRecurrenceOfMorphicSequences},
 Durand had already shown that uniform recurrence is decidable
 in the extended framework of morphic sequences.

As byproducts of our result, some trivialities that have hindered
 a general approach in the study of substitution minimal subshifts
 have been removed.
In many works concerning substitution minimal subshifts,
 it is often assumed that there exists a letter $a$ such that 
 $\sigma(a)$ begins from $a$.
However, it is an easy task to show that the condition is
 derived from the assumption that the substitution $\sigma$ is tame
 if we consider $\sigma^p$ for some $p \bpi$
 (see \cref{rem:tame-implies-head-letter}).
Thus, in consideration of minimal (in fact, tame) substitution subshifts,
 it has been clarified that
 the condition is not too restrictive.
In addition,
 on the assumption that the substitution is tame and $l$-primitive,
 we can easily conclude that $\sL(X_{\sigma}) = \sL(\sigma)$
 (see \cref{prop:admissible}).
Thus, we can show that \cite[Theorem 1]{Damanik2006SubstitutionDynSysCharactOfLinearRepetAndApplications} is valid for all substitution subshifts
 (see \cref{cor:DL_my_way}).

\section{Preliminaries}\label{sec:preliminaries}
In this section,
 we propose some additional notation and definitions.
Let $A$ be an alphabet and $u,v \in A^+$.
If $u$ is a factor of $v$, then we write $u \prec v$.
Note that $\ew \prec v$ for every word $v$.
For $u \in A^*$, we consider
 a \textit{language} $\sL(u) := \seb v \mid v \prec u \sen$.
For $u, v \in A^+$ with $\abs{u} \le \abs{v}$,
 we denote $\abs{v}_u$ as the number of occurrences of $u$ in $v$.
For example, we obtain $\abs{1101010}_{1010} = 2$.
In particular, we obtain $\abs{u} = \sum_{a \in A}\abs{u}_a$.
For $u \in A^+$, we denote
 $\Lett(u) := \sL(u) \cap A = \seb a \in A \mid a \prec u \sen$.
For $x \in A^{\Z}$, we denote
 $\Lett(x) := \seb a \in A \mid a = x(i) \myforsome i \bi \sen$.
For a subset $X \subseteq A^{\Z}$, we denote
 $\Lett(X) := \bigcup_{x \in X}\Lett(x)$.

After Durand
 \cite{Damanik2006SubstitutionDynSysCharactOfLinearRepetAndApplications},
 Damanik and Lenz
 in \cite{Damanik2006SubstitutionDynSysCharactOfLinearRepetAndApplications}
 studied linear repetitivity.

\begin{defn}
A subshift $(\Lambda,T)$
 is \textit{linearly repetitive} if 
 there exists a constant $C_{\rm LR} > 0$ 
 such that for all $v,w \in \sL(\Lambda)$ with $\abs{w} \ge C_{\rm LR}\abs{v}$,
 $\abs{w}_v \ge 1$ is satisfied.
\end{defn}

\begin{defn}
Let $W \subset A^+$.
We denote 
\[\sL(W) := \seb v \mid v \prec w \text{ for some } w \in W \sen.\]
\end{defn}

\begin{lem}\label{lem:infinite-words-implies-non-empty}
Let $W \subset A^+$ be an infinite set.
Then, there exists an $x \in A^{\Z}$ such that $\sL(x) \subseteq \sL(W)$.
\end{lem}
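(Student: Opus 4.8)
The plan is a standard compactness / König's-lemma argument. Since $W \subseteq A^+$ is infinite and $A$ is finite, $W$ contains words of arbitrarily large length (only finitely many words of each bounded length exist). For each $n \geq 1$ pick $w^{(n)} \in W$ with $\abs{w^{(n)}} \geq 2n+1$, and let $c_n \in A^{[-n,n]}$ be a ``central'' block of $w^{(n)}$, i.e.\ a factor of $w^{(n)}$ of length $2n+1$; every factor of $c_n$ lies in $\sL(W)$. Now I would use a diagonal argument: since $A$ is finite, infinitely many of the $c_n$ agree on the coordinate $0$; passing to that subsequence, infinitely many agree on coordinates $\{-1,0,1\}$; iterating and taking a diagonal subsequence, I obtain a single $x \in A^{\Z}$ such that for every $m$, the block $x[-m,m]$ is a factor of $c_n$ for infinitely many $n$, hence $x[-m,m] \in \sL(W)$. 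Consequently every factor $x[a,b]$ lies in $\sL(W)$ (it is a sub-factor of some $x[-m,m]$), so $\sL(x) \subseteq \sL(W)$, using that $\ew \in \sL(W)$ as well.

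Equivalently, and perhaps cleaner to write, I would phrase this via compactness of $A^{\Z}$: for each $n$ let $E_n \subseteq A^{\Z}$ be the (closed, nonempty) set of all $x$ such that $x[-n,n]$ is a factor of some word in $W$ of length $\geq 2n+1$; the $E_n$ are nested decreasing and nonempty, so by compactness $\bigcap_n E_n \neq \emptyset$, and any $x$ in the intersection works. Either formulation is routine.

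The only point requiring a little care — and the one I would flag as the ``main obstacle,'' though it is minor — is making sure the centering is done consistently so that the nested/diagonal structure genuinely produces a two-sided infinite sequence rather than just a one-sided one, and that every \emph{finite} factor of $x$ (not merely the symmetric windows $x[-m,m]$) is captured in $\sL(W)$. This is handled by the observation that $\sL(W)$ is closed under taking factors: if $x[-m,m] \in \sL(W)$ and $[a,b] \subseteq [-m,m]$ then $x[a,b] \prec x[-m,m]$, so $x[a,b] \in \sL(W)$; and any finite interval $[a,b]$ is contained in some $[-m,m]$. Beyond that, the argument is entirely elementary and uses nothing past the finiteness of $A$ and the pigeonhole principle.
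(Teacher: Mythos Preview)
Your proposal is correct and follows essentially the same approach as the paper: both arguments center (sub)words of growing odd length at the origin and use the finiteness of $A$ together with a pigeonhole/diagonal (equivalently, compactness) argument to extract a limit $x \in A^{\Z}$ whose symmetric windows $x[-m,m]$ lie in $\sL(W)$. The only cosmetic difference is that the paper first trims each $w \in W$ to odd length and places the \emph{entire} word around the origin, whereas you extract a length-$(2n+1)$ factor; the underlying idea and the level of detail are the same.
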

\begin{proof}
Without loss of generality, we remove the elements of $A \cup A^2$ from $W$.
If $\abs{w}$ is even, then we cut the first (or the last) letter from $w$.
In this way, we obtain a new set $W'$.
For each $w \in W'$, we define $l_w := (\abs{w}-1)/2$.
For each $w \in W'$, take an arbitrary $x_w \in A^{\Z}$ such that
 $x_w[-l_w,l_w] = w$.
Because $A$ is a finite set, we can find
 a sequence $w(i) \in W'$ $(i \bpi)$ such that
 $l_{w(1)} < l_{w(2)} < \dotsb$ and
 $x_{w(i)}[-l_{w(i)},l_{w(i)}] = x_{w(i+1)}[-l_{w(i)},l_{w(i)}]$
 for every $i \bpi$.
Thus, there exists an $x \in A^{\Z}$ such that
 $x_{w(i)} \to x$ as $i \to \infty$.
Now, it is clear that $\sL(x) \subseteq \sL(W') \subseteq \sL(W)$.
This completes the proof.
\end{proof}

Here, we introduce the notion of minimality.
Let $(\Lambda,T)$ be a subshift and $x \in \Lambda$.
Let $v \prec x$ be a finite block.
We say that $v$ \textit{appears in $x$ in bounded gaps} if
 there exists an $L_{x,v} > 0$ such that
 each finite block $w \prec x$ with $\abs{w} \ge L_{x,v}$
 satisfies $v \prec w$.
A subshift $(\Lambda,T)$ is \textit{minimal} if it does not
 contain any nonempty proper subshifts.
The next proposition is well-known (see, for example, \cite[\S 2]{Damanik2006SubstitutionDynSysCharactOfLinearRepetAndApplications} and \cite[\S 13.7]{Lind_1995AnIntroSymbDynSys}):
\begin{prop}\label{prop:minimal-subshifts}
A subshift $(\Lambda,T)$ is minimal
 if and only if every $v \in \sL(\Lambda)$ appears in bounded gaps
 in each $x \in \Lambda$.
In particular, linearly repetitive subshifts are minimal.
Furthermore, a subshift $(\Lambda,T)$ is minimal
 if and only if there exists an $x \in \Lambda$ such that
 each finite block $w \prec x$ appears in bounded gaps
 and $\Lambda$ is the orbit closure of $x$.
\end{prop}
\begin{proof}
Because this fact is well-known, we omit the proof here (see the
 aforementioned study).
\end{proof}

In this section, we give an example of a substitution that is not admissible.
\begin{example}\label{example:non-admissible}
Let $A = \seb 0, 1 \sen$ be an alphabet and define $\sigma : A \to A^+$
 by $\sigma(0) = 0$ and $\sigma(1) = 10$.
Then, $\sigma$ is a substitution in this paper,
 and $X_{\sigma}$ is a single fixed point $\cdots 000.000 \cdots$.
However, it follows that $100 \cdots 0 \in \sL(\sigma)$ with an arbitrary length.
\end{example}

\section{Substitutions}
\label{sec:main}
In this section, we give all of the proofs of our main results as well as
 additional basic definitions and notation.
This section seems to be lengthy for its content because we took 
 a self-contained approach.
If the reader knows the theory of nonnegative matrices,
 it may be challenging to give a short proof for this.
We tried to refer to some of the leading results that is obtained from this proof.
However, we have to apologize that our discussion is not sufficient.

Let $A$ be an alphabet and $\sigma : A \to A^+$ be a substitution.
It is easy to see that for every $a \in A_l$ and every $n \bpi$,
 it follows that $\Lett(\sigma^n(a)) \cap A_l \nekuu$.
It is evident that for each $a \in A_s$,
 the sequence $\sigma(a), \sigma^2(a), \sigma^3(a), \dotsc$
 is eventually periodic.
Thus, for each $a \in A_s$,
 we can take an arbitrarily large $n_s(a) \bpi$ and period $p_s(a)$
 such that if we set $w_s(a) := \sigma^{n_s(a)}(a)$, then
 $\sigma^{p_s(a)}(w_s(a)) = w_s(a)$.
Further, we obtain $\abs{\sigma^i(w_s(a))} = \abs{w_s(a)}$ for all $i \bpi$.

\begin{defn}\label{defn:As-const}
We define $n_s := \max_{a \in A_s}n_s(a)$,
 $p_s := \lcm \seb p_s(a) \mid a \in A_s \sen$, and
 $k_s := \max \seb \abs{w_s(a)} \mid a \in A_s \sen$.
\end{defn}

Damanik and Lenz in
\cite{Damanik2006SubstitutionDynSysCharactOfLinearRepetAndApplications}
 considered substitution dynamical systems $(X_{\sigma},T)$
 with the following assumptions:
\begin{itemize}[label={},topsep=12pt]
\setlength\itemsep{9pt}
\item
\makebox[0pt][l]{
\begin{minipage}[b]{\textwidth}
\begin{align}\label{item:infinite-char}
 \text{there exists an element } e \in A_l,
\end{align}
\end{minipage}}
\item
\makebox[0pt][l]{
\begin{minipage}[b]{\textwidth}
\begin{align}\label{item:all-char}
 A = \bigcup_{n \bpi} \Lett( \sigma^n(e)), \text{ and}
\end{align}
\end{minipage}}
\setlength\itemsep{0pt}
\item
\makebox[0pt][l]{
\begin{minipage}[b]{\textwidth}
\begin{align}\label{item:coincidence-with-infinite-system}
 \sL(X_{\sigma}) = \sL(\sigma).
\end{align}
\end{minipage}}
\end{itemize}

Thus, their class of substitutions is a proper subclass of the class of
 substitutions by our definition.
They showed the following:
\begin{thm}[{\cite[Theorem 1]{Damanik2006SubstitutionDynSysCharactOfLinearRepetAndApplications}}]\label{thm:DL}
Let $\sigma : A \to A^+$ be a substitution that satisfies
 \cref{item:infinite-char,item:all-char,item:coincidence-with-infinite-system}.
Then, the following statements are equivalent:
\enumrmb
\item
 There exists an $e \in A$ that satisfies
 \cref{item:infinite-char,item:all-char}
 and a $K > 0$ such that
 if\ \ $e w e \prec \sigma^n(e)$ with $e \not\in \Lett(w)$, then $\abs{w} < K$;
\item\label{item:DL2} $(X_{\sigma},T)$ is minimal; and
\item\label{item:DL3} $(X_{\sigma}, T)$ is linearly repetitive.
\enumrmn
\end{thm}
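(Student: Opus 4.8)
The plan is to prove the cycle of implications $\mathrm{(iii)}\Rightarrow\mathrm{(ii)}\Rightarrow\mathrm{(i)}\Rightarrow\mathrm{(iii)}$. Throughout, recall that $X_\sigma\neq\emptyset$, since $e\in A_l$ makes $\sL(\sigma)$ infinite and \cref{lem:infinite-words-implies-non-empty} then produces a point, and that $\sL(X_\sigma)=\sL(\sigma)$ by \cref{item:coincidence-with-infinite-system}. The implication $\mathrm{(iii)}\Rightarrow\mathrm{(ii)}$ is the soft one and uses nothing about substitutions: linear repetitivity with constant $C_{\rm LR}$ means that every $v\in\sL(X_\sigma)$ occurs in every factor of length at least $C_{\rm LR}\abs v$ of every $x\in X_\sigma$, so every orbit is dense and $(X_\sigma,T)$ is minimal.

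For $\mathrm{(ii)}\Rightarrow\mathrm{(i)}$, I would use for the letter demanded in $\mathrm{(i)}$ the letter $e$ already supplied by the standing hypothesis, which satisfies \cref{item:infinite-char,item:all-char,item:coincidence-with-infinite-system}; only the constant $K$ must be produced. By \cref{item:all-char}, $e\in\Lett(\sigma^m(e))$ for some $m\ge1$, hence $e\in\sL(\sigma)=\sL(X_\sigma)$, so $e$ occurs in $X_\sigma$; minimality together with a compactness argument applied to the clopen cylinder $\{x\in X_\sigma:x(0)=e\}$ gives an $M$ such that every length-$M$ factor of every $x\in X_\sigma$ contains $e$. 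If $ewe\prec\sigma^n(e)$ with $e\notin\Lett(w)$, then $ewe\in\sL(\sigma)=\sL(X_\sigma)$, so the $e$-free word $w$ is a factor of some $x\in X_\sigma$, whence $\abs w<M$; thus $\mathrm{(i)}$ holds with $K:=M$.

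The substantive direction is $\mathrm{(i)}\Rightarrow\mathrm{(iii)}$. Fix $e$ and $K$ as in $\mathrm{(i)}$. After routine normalization — passing to a power of $\sigma$, which preserves the subshift and, with some care about which letters survive, the part of \cref{item:all-char} that we need — one may assume $e\prec\sigma(e)$. Then two facts are available: every word of $\sL(\sigma)=\sL(X_\sigma)$ is a factor of some $\sigma^N(e)$ (each letter $a$ lies in $\Lett(\sigma^m(e))$ for some $m$, so $\sigma^n(a)\prec\sigma^{n+m}(e)$), and, by the $K$-condition, two occurrences of $e$ internal to a single $\sigma^N(e)$ lie within distance $K$. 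The first goal is to upgrade this to genuine syndeticity of $e$ in $X_\sigma$, i.e. a bound $R$ with every length-$R$ factor of every $x\in X_\sigma$ containing an occurrence of $e$. Granting this, the set $\mathcal R\subset\sL(\sigma)$ of return words to $e$ is finite, and the standard passage to the derived substitution $\hat\sigma$ on the return words to $e$ (after Durand) yields a substitution out of which $(X_\sigma,T)$ is recovered by replacing each symbol of $\mathcal R$ by the corresponding return word; here \cref{item:all-char} does the essential work, forcing every return word to be reachable from $e$ under iteration of $\sigma$ and hence $\hat\sigma$ to be primitive. Since a primitive substitution subshift is linearly repetitive (a classical fact), and since return words have bounded length, linear repetitivity transfers back to $(X_\sigma,T)$.

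The main obstacle is the syndeticity step inside $\mathrm{(i)}\Rightarrow\mathrm{(iii)}$: the $K$-condition controls only the gaps between occurrences of $e$ that are \emph{internal} to the words $\sigma^N(e)$, not their $e$-free prefixes and suffixes, which a priori may grow without bound. One must prove the dichotomy that if those prefixes or suffixes are unbounded then $X_\sigma$ collapses to a single periodic orbit — for which $\mathrm{(iii)}$ is immediate — while otherwise they are bounded and $e$ is genuinely syndetic; the normalization $e\prec\sigma(e)$ and, more delicately, \cref{item:all-char} are what make this dichotomy go through. A secondary point needing care is verifying that the derived substitution $\hat\sigma$ is primitive; everything else, including the transfer of linear repetitivity from the primitive derived system, is routine given the standard theory of primitive substitution subshifts.
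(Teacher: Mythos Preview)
The paper does not contain a proof of this theorem at all: it is stated as a cited result of Damanik and Lenz \cite{Damanik2006SubstitutionDynSysCharactOfLinearRepetAndApplications}, introduced by ``they showed the following,'' and the paper moves on immediately to its own programme. So there is no ``paper's own proof'' to compare against; any comparison is really to the original Damanik--Lenz argument.

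Your outline is broadly faithful to that original: the cycle $\mathrm{(iii)}\Rightarrow\mathrm{(ii)}\Rightarrow\mathrm{(i)}\Rightarrow\mathrm{(iii)}$, the easy implications, and the reduction of $\mathrm{(i)}\Rightarrow\mathrm{(iii)}$ to a Durand-style derived substitution on return words to $e$ are all what actually happens in \cite{Damanik2006SubstitutionDynSysCharactOfLinearRepetAndApplications}. However, as written your proposal is a sketch with the two genuinely nontrivial steps left as acknowledged obstacles rather than resolved. First, the syndeticity dichotomy (unbounded $e$-free prefixes/suffixes of $\sigma^N(e)$ force periodicity) is asserted but not argued; this is precisely where the work lies, and the hand-wave ``the normalization $e\prec\sigma(e)$ and, more delicately, \cref{item:all-char} are what make this dichotomy go through'' does not constitute a proof. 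Second, your normalization step is not innocuous: replacing $\sigma$ by $\sigma^n$ need not preserve \cref{item:all-char}, since $\bigcup_{m\ge 1}\Lett(\sigma^{nm}(e))$ can be a proper subset of $\bigcup_{k\ge 1}\Lett(\sigma^{k}(e))=A$; you must either argue that the relevant consequences survive or avoid the normalization. Finally, the primitivity of the derived substitution is flagged but not proved. In short: the strategy is right and matches the source, but the proposal is an outline, not a proof.
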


After the discussion by Cortez and Solomyak
 \cite{Cortez2011InvariantMeasuresForNonPrimitiveTilingSubstitutions}
 in the case of tile substitutions,
 Maloney and Rust in \cite[\S 1.2]{Maloney_Rust_2016BeyondPrimitivityForOneDimensionalSubstitutionSubshiftsAndTilingSpaces}
 called a word $u$ \textit{legal} if $u \in \sL(X_{\sigma})$.
They noted that, in \cite{Cortez2011InvariantMeasuresForNonPrimitiveTilingSubstitutions}, it is shown that a substitution is admissible
 if and only if every letter in $A$ is legal.
Thus, the above theorem is for every admissible substitution with a special
 letter $e$.

\begin{defn}\label{defn:atob}
Let $a,b \in A$.
We write $a \to b$ if there exists an $n \bpi$
 such that $b \in \Lett(\sigma^n(a))$.
\end{defn}

We note that for each $a \in A$, there exists at least one letter $b \in A$
 with $a \to b$.
Furthermore, it follows that if $a \in A_l$, then there exists a $b \in A_l$
 with $a \to b$.
We define as $A_{\circ} := \seb a \in A \mid a \to a \sen$
  and $A_{\circ,l} := A_{\circ} \cap A_l$.
The proofs of the following facts are left to the readers:
\begin{itemize}[label={},topsep=12pt]
\setlength\itemsep{0pt}
\item
\makebox[0pt][l]{
\begin{minipage}[b]{\textwidth}
\begin{align}\label{item:atob-btoc-implies-atoc}
 (a \to b \myand b \to c) \text{ implies } a \to c,
\end{align}
\end{minipage}}
\item
\makebox[0pt][l]{
\begin{minipage}[b]{\textwidth}
\begin{align}\label{item:AcircAl-is-not-empty}
 A_{\circ,l} \nekuu,
\end{align}
\end{minipage}}
\setlength\itemsep{0pt}
\item
\makebox[0pt][l]{
\begin{minipage}[b]{\textwidth}
\begin{align}\label{item:AcircAl-implies-sigmaaAcircAl}
 \text{every } a \in A_{\circ,l} \text{ satisfies }
 \Lett\left(\sigma(a)\right) \cap A_{\circ,l} \nekuu, \myand
\end{align}
\end{minipage}}
\item
\makebox[0pt][l]{
\begin{minipage}[b]{\textwidth}
\begin{align}\label{item:AcircAl-implies-sigmanaAcircAl}
 \text{for every } a \in A_{\circ,l} \myand n \bpi,
 \text{ it follows that } \Lett(\sigma^n(a)) \cap A_{\circ,l} \nekuu.
\end{align}
\end{minipage}}
\end{itemize}

\begin{defn}\label{defn:minimal}
A letter $a \in A_l$ is \textit{minimal in} $A_l$ if
 $a \to b$ with $b \in A_l$ implies $b \to a$.
We also define
 $A_{\min,l} := \seb a \in A_l \mid a \text{ is minimal in } A_l \sen$.
\end{defn}

\begin{lem}\label{lem:AlAcirc-includes-Aminl-and-nekuu}
It follows that $A_{\circ,l} \supseteq A_{\min,l} \nekuu$.
\end{lem}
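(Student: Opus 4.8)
The claim has two parts: the inclusion $A_{\circ,l} \supseteq A_{\min,l}$ and the non-emptiness $A_{\min,l} \nekuu$. The plan is to dispatch each part by a short argument about the partial order $>$ restricted to $A_l$.

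For the inclusion, I would take $a \in A_{\min,l}$ and show $a \to a$. By \cref{defn:atob}, since $a \in A_l$ there exists $b \in A_l$ with $a \to b$. Minimality of $a$ in $A_l$ (\cref{defn:minimal}) then forces $b \to a$, and by the transitivity remark after \cref{defn:atob} we get $a \to a$, i.e.\ $a \in A_{\circ}$; combined with $a \in A_l$ this gives $a \in A_{\circ,l}$. Hence $A_{\min,l} \subseteq A_{\circ,l}$.

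For the non-emptiness, the argument is the same finite-descent idea already used in the proof that $A_{\circ,l} \nekuu$: on $A_l$ (a finite set) the relation $>$ is a strict partial order, so there cannot be an infinite chain $a_1 > a_2 > \dotsb$; therefore there exists $a \in A_l$ that is $>$-minimal among the elements of $A_l$, meaning there is no $b \in A_l$ with $a > b$. I then check this $a$ lies in $A_{\min,l}$: if $a \to b$ with $b \in A_l$, then since $a \not> b$ and $a \to b$, the definition of descendant forces $b \to a$; thus $a$ is minimal in $A_l$ in the sense of \cref{defn:minimal}, so $A_{\min,l} \nekuu$.

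There is no real obstacle here — both parts are immediate from the definitions plus finiteness of $A$ and transitivity of $\to$. The only point requiring a little care is keeping straight the distinction between "$a$ is minimal in $A_l$" (no $A_l$-descendants) and "$a \in A_{\circ,l}$" (self-reachable and long), and noticing that the first actually implies the second via the existence of some $b \in A_l$ with $a \to b$ guaranteed by \cref{defn:atob}.
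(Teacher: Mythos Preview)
Your proof is correct and follows essentially the same route as the paper: the inclusion argument is verbatim the paper's, and for non-emptiness the paper simply remarks that $A_l \nekuu$ is finite, which is exactly the finite-descent on the strict partial order $>$ that you spell out in more detail.
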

\begin{proof}
It is evident that $A_l \supseteq A_{\min,l}$.
We have to show that $A_{\circ} \supseteq A_{\min,l}$.
Let $a \in A_{\min,l}$.
Then, there exists a $b \in A_l$ with $a \to b$.
By minimality in $A_l$, we obtain $b \to a$.
Thus, we obtain $a \to a$, i.e., $a \in A_{\circ}$.
To show that $A_{\min,l} \nekuu$,
 it is sufficient to note that $A_l \nekuu$ (our standing hypothesis for
 the substitutions) and $A_l$ is a finite set.
\end{proof}

Let $a \in A_l$ be a left-isolated letter.
Then, there exists an $n_{\minus}(a) \bpi$ such that
 $\sigma^{n_{\sminus}(a)}(a) =  l_s(a)\ a\ w_{\plus}(a)$ with
 $l_s(a) \in {A_s}^+$ and $w_{\plus}(a) \in A^*$.
Similarly, for a right-isolated letter $a \in A_l$,
 there exists an $n_{\plus}(a) \bpi$ such that
 $\sigma^{n_{\splus}(a)}(a) =  w_{\minus}(a)\ a\ r_s(a)$ with
 $r_s(a) \in {A_s}^+$ and $w_{\minus}(a) \in A^*$.
We note that $l_s(a)$ and $r_s(a)$ cannot be $\ew$.
On the other hand, $w_{\minus}(a)$ and $w_{\plus}(a)$
 may be $\ew$.
For a left-isolated letter $a \in A_l$ and $k \bpi$,
 we obtain the expression
 $\sigma^{n_{\sminus}(a)k}(a) =  l_s(k,a)\ a\ w_{\splus}(k,a)$ with
 $l_s(k,a) \in {A_s}^+$.
We obtain $\abs{l_s(k,a)} \to \infty$ as $k \to \infty$.
In the same way, for a right-isolated letter $a \in A_l$ and $k \bpi$,
 we obtain the expression 
 $\sigma^{n_{\splus}(a)k}(a) =  w_{\minus}(k,a)\ a\ r_s(k,a)$ with
 $r_s(k,a) \in {A_s}^+$, and $\abs{r_s(k,a)} \to \infty$ as $k \to \infty$.

\begin{lem}[{\cite[Lemma 2.8]{Maloney_Rust_2016BeyondPrimitivityForOneDimensionalSubstitutionSubshiftsAndTilingSpaces}}, \cite{Pansiot1984ComplexiteDesFacteursDesMotsInfinisEngendresParMorphismesIteres}]\label{lem:isolated-periodic}
Suppose that there exists a left- or right-isolated
 letter in $A_l$.
Then, $(X_{\sigma},T)$ contains a periodic point.
\end{lem}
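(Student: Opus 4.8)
Lemma \ref{lem:isolated-periodic}: if there exists a left isolated or right isolated letter in $A_l$, then $(X_\sigma,T)$ contains a periodic point.

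\textbf{Proof plan.} The plan is to treat a left isolated letter $a\in A_l$ (the right isolated case being the mirror image) and to build an explicit periodic point of $X_\sigma$ out of the letters sitting to the left of the ``fixed copy'' of $a$ under a suitable power of $\sigma$.

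First I would unwind the definition. Put $n:=n_{\minus}(a)$, $\tau:=\sigma^{n}$, $u:=l_s(a)\in (A_s)^+$ and $v:=w_{\plus}(a)\in A^*$, so that $\tau(a)=u\,a\,v$. Iterating, $\tau^{m}(a)=\tau^{m-1}(u)\cdots\tau(u)\,u\cdot a\cdot v\,\tau(v)\cdots\tau^{m-1}(v)$, and the prefix $P_m:=\tau^{m-1}(u)\cdots\tau(u)\,u$ is a word over $A_s$ (because $u\in (A_s)^+$ and $\sigma$ carries words over $A_s$ to words over $A_s$), is a factor of $\sigma^{nm}(a)$ and hence lies in $\sL(\sigma)$, is a suffix of $P_{m+1}$, and satisfies $\abs{P_m}\to\infty$ (\cref{defn:isolated}). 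Consequently the left-infinite word $L:=\cdots\tau^{2}(u)\,\tau(u)\,u$ over $A_s$ is well defined and every one of its finite factors lies in $\sL(\sigma)$.

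Next I would exploit that short letters become eventually $\sigma$-periodic with frozen length. With $n_s,p_s$ as in \cref{defn:As-const}, $w(u):=\sigma^{n_s}(u)$ satisfies $\sigma^{p_s}(w(u))=w(u)$ and $\abs{\sigma^{i}(w(u))}=\abs{w(u)}$ for all $i$. Applying $\sigma^{n_s}$ blockwise to $L$ yields
\[M:=\sigma^{n_s}(L)=\cdots\ \sigma^{2n}(w(u))\ \sigma^{n}(w(u))\ w(u),\]
whose $j$-th block $\sigma^{jn}(w(u))$ has length $\abs{w(u)}$ and, because $\sigma^{p_s}$ fixes $w(u)$, depends only on $j$ modulo $p_s$. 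Hence, reading leftward, the tail of $M$ is periodic with period block $Q$ of length $\abs{Q}=p_s\abs{w(u)}\ge 1$, so $Q^{k}$ is a factor of $M$ for every $k$. Moreover every finite factor of $M$ is the $\sigma^{n_s}$-image of a factor of some $P_m\prec\sigma^{nm}(a)$, hence lies in $\sL(\sigma)$.

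Finally I would set $y:=Q^{\Z}\in A^{\Z}$: any finite factor of $y$ sits inside some $Q^{k}$, hence inside $M$, hence in $\sL(\sigma)$, so $y\in X_\sigma$; and $y$ is periodic of period $\abs{Q}\ge 1$. Thus $(X_\sigma,T)$ has a periodic point, and the right isolated case is identical with $r_s(a)$ in place of $l_s(a)$ and a right-infinite word in place of $L$. I expect the only real care to be in the bookkeeping of the third paragraph — that $P_m$ really is a suffix of $P_{m+1}$, that $\sigma^{n_s}$ commutes with the block decomposition of $L$, and that $\abs{Q}\ge 1$ so that $y$ is genuinely periodic rather than degenerate.
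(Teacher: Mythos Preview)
Your argument is correct and takes essentially the same approach as the paper: both isolate the growing $A_s$-prefix to the left of the recurring copy of $a$ and exploit the eventual $\sigma$-periodicity of $A_s$-words to exhibit arbitrarily long repetitions of a fixed block inside $\sL(\sigma)$. The only difference is cosmetic---the paper replaces $n_{\minus}(a)$ by $n_0:=\lcm(n_s,p_s,n_{\minus}(a))$ from the start so that $\sigma^{2n_0}(l_s(a))=\sigma^{n_0}(l_s(a))$ and the period block appears immediately, whereas you keep $n=n_{\minus}(a)$, apply $\sigma^{n_s}$ afterward, and track periodicity modulo $p_s$.
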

\begin{proof}
Suppose that there exists a left-isolated letter $a \in A_l$.
We employ the above notation
 and obtain the expression
 $\sigma^{n_{\sminus}(a)}(a) =  l_s(a)\ a\ w_{\plus}(a)$ and also
 $\sigma^{n_{\sminus}(a)k}(a) =  l_s(k,a)\ a\ w_{\splus}(k,a)$ with
 $l_s(k,a) \in {A_s}^+$.
From these, we can derive the equation
\[ l_s(k,a) = \sigma^{k-1}(l_s(a))\ \sigma^{k-2}(l_s(a))\ \cdots\ l_s(a).\]
It is easy to see that the left half of $l_s(k,a)$ is periodic for 
 sufficiently large $k$.
Thus, $(X_{\sigma},T)$ contains a periodic point.
A similar argument shows the proof for the right-isolated case.
\end{proof}

The result of the above lemma is equivalent to stating that
 if a substitution subshift does not contain any periodic orbits, then
 it is tame.

\begin{rem}\label{rem:tameness}
Because the condition of tameness has been considered in preceding works
 (see \cite{Pansiot1984ComplexiteDesFacteursDesMotsInfinisEngendresParMorphismesIteres,Maloney_Rust_2016BeyondPrimitivityForOneDimensionalSubstitutionSubshiftsAndTilingSpaces}),
 even if we do not write explicitly,
 many of our results concerning tameness in this paper
 should be parts of them.
\end{rem}

\begin{rem}\label{rem:check-not-isolated}
We note that both left isolation and right isolation can be checked by finite calculations.
Therefore, the tameness can be checked by finite calculations.
\end{rem}

For each $a \in A_l$ and $n \bpi$, we write
\begin{align}\label{align:sigmana}
 \sigma^n(a) = l_s(n,a)\ & a_{\minus,n}\ w_s(n,a,1)\ a_{n,1}\ w_s(n,a,2)
 \ a_{n,2}\ w_s(n,a,3) \\
 \ & \hspace{2.8cm} \dotsb\  w_s(n,a,j(n,a))\ a_{\plus,n}\ r_s(n,a),\nonumber
\end{align}
in which
 $a_{\minus,n}, a_{\plus,n} \in A_l, a_{n,i} \in A_l$
 for $(1 \le i < j(n,a))$,
 $l_s(n,a), r_s(n,a) \in {A_s}^*,\ 
 w_s(n,a,i) \in {A_s}^*$ for $(1 \le i \le j(n,a))$.
We note that we permit $w_s(n,a,i) = \ew$ and also 
 $\sigma^n(a) = l_s(n,a)\ a_{\minus,n} (= a_{\plus,n})\ r_s(n,a)$.
For each $w_s(n,a,i) \in {A_s}^*$, it follows that
 $\abs{\sigma^m(w_s(n,a,i))} \le k_s\abs{w_s(n,a,i)}$
 for each $m \bpi$ (cf. \cref{defn:As-const}).

\begin{rem}\label{rem:tame-implies-head-letter}
The sequence $a_{\minus,n}$ $(n=1,2,\dotsc)$ is eventually periodic.
Therefore,
 there exists a letter $b \in A_l$ such that if we set $b_{\minus,0} = b$,
 then $b_{\minus,n}$ $(n=0,1,2,\dotsc)$ is periodic.
If $l_s(n,b) \in {A_s}^+$ for some $n \bpi$, then the substitution
 $\sigma$ is not tame.
Therefore, if a substitution $\sigma : A \to A^+$ is tame,
 then there exists a letter $a \in A_l$ and a $p \bpi$ such that 
 $l_s(n,a) = \ew$ for all $n \bpi$ and $\sigma^p(a) = a w(a,p)$
 for some $w(a,p) \in A^+$.
Thus, by cutting the nontame substitutions
 and taking $\sigma^p$ instead of $\sigma$ itself,
 we obtain the standing hypothesis that is assumed in many preceding studies,
 i.e., there exists a letter $a \in A_l$ that satisfies
 $\sigma(a) = a w$ with $w \in A^+$
 (see also \cref{prop:exists-periodic-point-of-sigma}).
\end{rem}

\begin{rem}
Suppose that a substitution $\sigma : A \to A^+$ is tame.
Then, $a_{\minus,n}$ and $a_{\plus,n}$ are eventually periodic
 as $n$ grows.
Thus, by the definition of tameness, we obtain
 $\lim_{n \to \infty}\abs{l_s(n,a)} < \infty$ and 
 $\lim_{n \to \infty}\abs{r_s(n,a)} < \infty$.
\end{rem}

\begin{defn}\label{defn:As-bi-sided-limit-in-non-isolated-case}
Suppose that a substitution $\sigma : A \to A^+$ is tame.
We define\\
 $e_{s,\minus} := \max_{n \bpi, a \in A_l}\abs{l_s(n,a)}$,
 $e_{s,\plus} := \max_{n \to \infty, a \in A_l}\abs{r_s(n,a)}$, and
 $e_s := \max \seb e_{s,\minus}, e_{s,\plus} \sen$.
\end{defn}

\begin{nota}
For $a \in A_l$ and $n \bpi$, we denote
\[\gap_s(n,a) := \max_{1 \le i \le j(n,a)} \abs{\sigma^{n_s}w(n,a,i)} \le
 \max_{1 \le i \le j(n,a)}k_s \abs{w(n,a,i)}, \myand\]
\[\gap_s(n) := \max_{a \in A_l}\gap_s(n,a).\]
We note that $\gap(n)$ is nondecreasing.
\end{nota}

The following result is a part of \cite{Maloney_Rust_2016BeyondPrimitivityForOneDimensionalSubstitutionSubshiftsAndTilingSpaces}.
It is also found in \cite{Pansiot1984ComplexiteDesFacteursDesMotsInfinisEngendresParMorphismesIteres} in the context of D0L systems.

\begin{lem}\label{lem:gap-by-As-bounded}
Suppose that $\sigma : A \to A^+$ is a tame substitution.
Then, we obtain\\
 $\lim_{n \to \infty}\gap_s(n) < \infty$.
\end{lem}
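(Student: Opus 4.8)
The plan is a proof by contradiction. Since $\gap(n)$ is nondecreasing, it suffices to show $\sup_n\gap(n)<\infty$. If this fails, then using $\gap(n,a)\le k_s\max_i\abs{w(n,a,i)}$ together with the finiteness of $A_l$ I would extract letters $a,p,q\in A_l$, an increasing sequence $(n_k)$, and internal gaps $w_k:=w(n_k,a,i_k)\in{A_s}^{*}$ with $\abs{w_k}\to\infty$ such that, in the relevant occurrence inside $\sigma^{n_k}(a)$, the word $w_k$ is immediately preceded by the letter $p$ and immediately followed by $q$ (two consecutive $A_l$-letters of $\sigma^{n_k}(a)$).

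The core is a controlled desubstitution of the block $p\,w_k\,q$. I would first record the elementary facts that $b\in A_l$ iff $\sigma(b)$ contains a letter of $A_l$, and hence $\sigma({A_s}^{*})\subseteq{A_s}^{*}$. Fix $k$, set $w^{(0)}=w_k$, $p^{(0)}=p$, $q^{(0)}=q$, and $M=\max_{c\in A}\abs{\sigma(c)}$. As long as $\abs{w^{(t)}}\ge 2M$, the block $p^{(t)}\,w^{(t)}\,q^{(t)}$ in $\sigma^{n_k-t}(a)$ straddles the images of two \emph{distinct} letters $p^{(t+1)},q^{(t+1)}$ of $\sigma^{n_k-t-1}(a)$; one checks these lie in $A_l$, that every letter strictly between them in $\sigma^{n_k-t-1}(a)$ lies in $A_s$ (so they are consecutive $A_l$-letters), and that, writing $w^{(t+1)}$ for the $A_s$-word between them — an internal gap one level down — one has
\[w^{(t)}=X_{t+1}\,\sigma\!\left(w^{(t+1)}\right)Y_{t+1},\qquad p^{(t)}=\lambda\big(p^{(t+1)}\big),\quad q^{(t)}=\rho\big(q^{(t+1)}\big),\]
where $\lambda(b)$ and $\rho(b)$ are the last and first $A_l$-letter of $\sigma(b)$, and $X_{t+1}$, $Y_{t+1}$ are the $A_s$-suffix of $\sigma(p^{(t+1)})$ and the $A_s$-prefix of $\sigma(q^{(t+1)})$, each of length $<M$. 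Since $\abs{w^{(t)}}$ is then nonincreasing, the descent runs to a first level $T$ with $\abs{w^{(T)}}<2M$, and unrolling the recursion writes $w^{(0)}$ as a product $\big(\prod_{j=1}^{T}\sigma^{j-1}(X_j)\big)\cdot\sigma^{T}(w^{(T)})\cdot\big(\prod_{j=T}^{1}\sigma^{j-1}(Y_j)\big)$, in which each factor $\sigma^{j-1}(X_j)$ has length at most a constant $M''$ depending only on $\sigma$.

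The decisive point is that the flanking letters stabilise periodically: on the eventual images $E=\lambda^{\abs{A_l}}(A_l)$ and $E'=\rho^{\abs{A_l}}(A_l)$ the maps $\lambda,\rho$ restrict to permutations, so for a suitable common period $\delta$ (e.g.\ $\abs{A_l}!$) and every $t$ with $t+\delta\le T-\abs{A_l}$ we have $p^{(t)}=p^{(t+\delta)}$ and $q^{(t)}=q^{(t+\delta)}$, because $p^{(t)}=\lambda^{T-t}(p^{(T)})\in E$ in that range. Setting $b=p^{(t+\delta)}$ for such a $t$, we get $\lambda^{\delta}(b)=b$, i.e.\ $b$ is the last $A_l$-letter of $\sigma^{\delta}(b)$; if the $A_s$-suffix of $\sigma^{\delta}(b)$ were nonempty then $b$ would be right isolated, contradicting \cref{item:not-isolated}, so that suffix is empty. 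A short induction identifies it with $X_{t+1}\sigma(X_{t+2})\cdots\sigma^{\delta-1}(X_{t+\delta})$, so this product equals $\ew$; symmetrically, using that $q^{(t+\delta)}$ is not left isolated, the corresponding product of $Y$-terms is $\ew$. Hence for every such $t$ the recursion collapses to the boundary-free identity $w^{(t)}=\sigma^{\delta}(w^{(t+\delta)})$.

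To finish: if $T\le\abs{A_l}+\delta$, the unrolling of the previous paragraph already bounds $\abs{w^{(0)}}$ by a constant. Otherwise I iterate $w^{(t)}=\sigma^{\delta}(w^{(t+\delta)})$ from $t=0$ down to the largest multiple $s^{*}$ of $\delta$ with $s^{*}\le T-\abs{A_l}$, getting $w^{(0)}=\sigma^{s^{*}}(w^{(s^{*})})$ and hence $\abs{w^{(0)}}\le K\abs{w^{(s^{*})}}$ with $K=\max\{\abs{\sigma^{m}(c)}:c\in A_s,\ m\ge 0\}<\infty$ (valid since $w^{(s^{*})}\in{A_s}^{*}$), while the at most $\abs{A_l}+\delta$ per-step steps from $s^{*}$ to $T$ bound $\abs{w^{(s^{*})}}$ by a constant. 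Either way $\abs{w_k}=\abs{w^{(0)}}$ is bounded independently of $k$, contradicting $\abs{w_k}\to\infty$; so $\gap(n)$ is bounded, and being nondecreasing it has a finite limit. The main obstacle — exactly what tameness is there to defeat — is preventing the boundary words $X_j,Y_j$ from accumulating along a long descent; the resolution is that over one full period of $\lambda$ (resp.\ $\rho$) these boundary words reassemble into the $A_s$-suffix (resp.\ prefix) of $\sigma^{\delta}(b)$, which tameness forces to be empty.
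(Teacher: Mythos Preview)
Your argument is correct, but it takes a genuinely different route from the paper's.

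The paper argues \emph{forward}: for a gap $w$ between consecutive $A_l$-letters $b,c$ in $\sigma^n(a)$, one has
\[
\sigma^m(bwc)=\cdots\,b_{\plus,m}\,r_s(m,b)\,\sigma^m(w)\,l_s(m,c)\,c_{\minus,m}\,\cdots,
\]
and tameness, already packaged into the constant $e_s$ of \cref{defn:As-bi-sided-limit-in-non-isolated-case}, gives $\abs{r_s(m,b)},\abs{l_s(m,c)}\le e_s$, while $\abs{\sigma^m(w)}\le k_s\abs{w}$. Tracking old versus new gaps yields $\gap(kn)\le k_s\gap(n)+2e_s$ for every $k$, hence a uniform bound in two or three lines.

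Your approach goes \emph{backward}: you desubstitute a long gap level by level, peel off boundary words $X_j,Y_j$, and then exploit the eventual periodicity of the last/first-$A_l$-letter maps $\lambda,\rho$ on $A_l$ to show that over one full period $\delta$ the concatenation $X_{t+1}\sigma(X_{t+2})\cdots\sigma^{\delta-1}(X_{t+\delta})$ is exactly the $A_s$-suffix of $\sigma^{\delta}(b)$ for $b=p^{(t+\delta)}$; tameness forces this to be $\ew$, collapsing the recursion to $w^{(0)}=\sigma^{s^*}(w^{(s^*)})$ with $\abs{w^{(s^*)}}$ bounded and $w^{(s^*)}\in A_s^*$. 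This is sound, including the claim that $\lambda^{\delta}(b)=b$ makes $b$ the last $A_l$-letter of $\sigma^{\delta}(b)$ (this follows by induction since $\sigma(A_s^*)\subseteq A_s^*$), and the bound on $\abs{w^{(s^*)}}$ via the at most $\abs{A_l}+\delta$ residual steps.

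The trade-off: the paper's proof is much shorter because it has already isolated the consequence of tameness as the single constant $e_s$; your argument effectively re-derives that consequence through the combinatorics of $\lambda,\rho$, which is more structural but considerably longer. If you adopt $e_s$ directly, your desubstitution would also terminate in one step: $\abs{X_j},\abs{Y_j}\le e_s$ would already bound $\abs{w^{(0)}}$ without any periodicity argument.
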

\begin{proof}
Take and fix an $a \in A_l$ and an $n \bpi$ arbitrarily.
In the calculation in \cref{align:sigmana}, consider
 $\sigma^m(a_{n,i}\ w(n,a,i) \ a_{n,i+1})$ for $1 \le i < j(n,a)$
 and an arbitrary $m \bpi$.
 We define $a_{n,j(n,a)} := a_{+,n}$.
We rewrite this as $\sigma^m(b\ w\ c)$ with $b = a_{n,i}, c = a_{n,i+1}$,
 and $w = w(n,a,i)$.
By \cref{item:not-isolated},
 we obtain that $b = a_{n,i}$ is not right-isolated and $c = a_{n,i+1}$ is not
 left-isolated.
Therefore, we can calculate the following:
\begin{align}
\begin{split}
 \sigma^m(b\ w\ c) & = \sigma^m(b)\ \sigma^m(w) \ \sigma^m(c)\\
 \ & = w(m,b)\ b_{\plus,m}\ r_s(m,b)\ \sigma^m(w)
 \ l_s(m,c)\ c_{\minus,m}\ w(m,c),
\end{split}
\end{align}
with some $b_{\plus,m}, c_{\minus,m} \in A_l$, $w(m,b), w(m,c) \in A^*$,
 and $r_s(m,b), l_s(m,c) \in {A_s}^*$\\
 with $\abs{r_s(m,b)}, \abs{l_s(m,c)} \le e_s$.
Each gap in $\sigma^m(a_{n,i}\ w(n,a,i) \ a_{n,i+1})$ appears as
\[ r_s(m,b)\ \sigma^m(w)\ l_s(m,c)\]
 or appears in $\sigma^m(b)$ or $\sigma^m(c)$.
As $m$ becomes larger, the length of the gap
 $r_s(m,b)\ \sigma^m(w)\ l_s(m,c)$\quad may become larger;
 however, it has an upper bound that is estimated as
 $e_s + k_s \cdot \abs{w} +e_s$
 (cf. \cref{defn:As-bi-sided-limit-in-non-isolated-case,defn:As-const}).
Thus, the length of the gap has an upper bound $\le k_s \cdot \gap_s(n) +2e_s$.
We consider the case in which $m = n$.
The length of the new gaps in $\sigma^n(b)$ or $\sigma^n(c)$ is bound
 by $\gap(n)$.
Thus, we obtain $\gap_s(2n) \le k_s \cdot \gap_s(n) + 2e_s$.
We consider the case in which $m = 2n$.
Then, in $\sigma^{3n}(a)$, the gaps that have appeared
 in $\sigma^{2n}(a)$ shall be 
 bound as $\gap_s(2n) \le k_s \cdot \gap_s(n) + 2e_s$.
The new gaps in $\sigma^{3n}(a)$ are in $\sigma^n(a')$ $(a' \in A_l)$.
Thus, we obtain $\gap_s(3n) \le k_s \cdot \gap_s(n) + 2e_s$.
In this way,
 we obtain $\gap_s(kn) \le k_s \cdot \gap_s(n) + 2e_s$ for all $k \bpi$.
Because $\gap_s(i)$ is nondecreasing,
 we obtain $\gap_s(i) \le k_s \cdot \gap_s(n) + 2e_s$ for all $i \bpi$.
This completes the proof.
\end{proof}

We continue to study about tameness.
Let $\sigma : A \to A^+$ be a substitution.
For each $x \in A^{\Z}$, we write $x = x_{\minus}. x_{\plus}$
 $(x_{\minus} \in A^{(-\infty,-1]}, x_{\plus} \in A^{[0,\infty)})$
 such that $x_{\minus}(i) = x(i)$ ($i < 0$) and $x_{\plus}(i) = x(i)$
 ($0 \le i$).
In the introduction, we have already defined
 a map $\sigma : A^{\Z} \to A^{\Z}$ by
 $\sigma(x)
 := \dotsb \sigma(x(-2)) \sigma(x(-1)). \sigma(x(0)) \sigma(x(1)) \dotsb$.
We would like to show that $\sigma(X_{\sigma}) \subseteq X_{\sigma}$.
For every $x \in X_{\sigma}$ and $s < t$,
 there exists an $a \in A_l$ such that
 $x[s,t] \in \sL(a,\infty)$.
Then, it follows that for all $s < t$, there exists an $a \in A_l$ with
 $\sigma(x[s,t]) \in \sL(a,\infty) \subseteq \sL(\sigma)$,
 i.e., $\sigma(x) \in X_{\sigma}$.
Therefore, we obtain $\sigma(X_{\sigma}) \subseteq X_{\sigma}$.
Thus, we obtain a map $\sigma : X_{\sigma} \to X_{\sigma}$.
In \cite{Maloney_Rust_2016BeyondPrimitivityForOneDimensionalSubstitutionSubshiftsAndTilingSpaces}, to clarify the notion of tameness,
 they described that most of the possible pathological
 behaviors of nonminimal substitutions cannot occur in the presence of tameness.
The next proposition shows one of the examples of how tameness works.

\begin{prop}\label{prop:exists-periodic-point-of-sigma}
Let $\sigma : A \to A^+$ be a tame substitution.
There exist a positive integer $p$,
 letters $b_0,c_0 \in A_l$, and a word $w_0 \in {A_s}^+$
 that satisfy $\sigma^p(b_0) = w_- b_0$, $\sigma^p(c_0) = c_0 w_+$,
 $\sigma^p(w_0) = w_0$,
 and $b_0 w_0 c_0 \in \sL(\sigma)$.
In particular, there exist an $x \in X_{\sigma}$, $s < 0$,
 and a $p \bpi$ such that $x[s,-1] = b_0 w_0$, $x(0) =  c_0$, and
 $\sigma^p(x) = x$.
\end{prop}
\begin{proof}
Let $a \in A_l$.
Because $\abs{\sigma^i(a)} \to \infty$ as $i \to \infty$,
 \cref{lem:gap-by-As-bounded} implies that
 there exist an $N > 0$ and a word $b w c \prec \sigma^N(a)$ with
 $b,c \in A_l$ and $w \in {A_s}^*$.
For each $m \bpi$, we can write
\begin{align}
 \sigma^m(b\ w\ c) & = \sigma^m(b)\ \sigma^m(w) \ \sigma^m(c)\\
 \ & = w(m,b)\ b_{\plus,m}\ r_s(m,b)\ \sigma^m(w)
 \ l_s(m,c)\ c_{\minus,m}\ w(m,c)\nonumber
\end{align}
for some $b_{\plus,m}, c_{\minus,m} \in A_l$, $w(m,b), w(m,c) \in A^*$,
 and $r_s(m,b), l_s(m,c) \in {A_s}^*$ with\\
 $\abs{r_s(m,b)},\ \abs{l_s(m,c)} \le e_s$.
The sequence of pairs $(b_{\plus,m},c_{\minus,m})$ $(m \bpi)$
 is eventually periodic, and
 $r_s(m,b) \sigma^m(w)  l_s(m,c)$ is also eventually periodic.
Therefore, there exists an $l \bpi$ and a $p \bpi$ such that
 if we write $b_0 := b_{\plus,l}$, $c_0 := c_{\minus,l}$ and
 $w_0 := r_s(l,b) \sigma^l(w)  l_s(l,c)$, then
 $\sigma^p(b_0 w_0 c_0) = \sigma^p(b_0)\ \sigma^p(w_0) \sigma^p(c_0)
 = w_- b_0 \sigma^p(w_0) c_0 w_+ = w_- b_0 w_0 c_0 w_+$
 with $w_-, w_+ \in A^+$.
By applying $\sigma^p$ infinitely on $b_0 w_0 . c_0$,
 we can obtain a fixed point $\sigma^p(x) = x \in X_{\sigma}$
 such that $x(0) = c_0$.
This completes the proof.
\end{proof}

In addition to the tameness, $l$-primitivity 
 plays a central role in this paper.

\begin{rem}\label{rem:primitive-Al}
Let $\sigma : A \to A^+$ be a substitution that is $l$-primitive.
By definition, there exists an $n \bpi$ such that
 for every $a,b \in A_l$, it follows that $a$ is a letter in $\sigma^n(b)$.
Then, it follows that for every $a \in A_l$, there exists a $c \in A_l$
 such that $a \in \Lett(\sigma(c))$.
Thus, for every $m \ge n$, it follows that $\abs{\sigma^m(b)}_a \ge 1$
 for arbitrary $a, b \in A_l$.
\end{rem}

\begin{lem}\label{lem:every-letter-in-Al-in-bounded-gaps}
Let $\sigma : A \to A^+$ be a substitution.
Suppose that $\sigma$ is tame and $l$-primitive.
Let $x \in X_{\sigma}$ be a periodic point of
 $\sigma :X_{\sigma} \to X_{\sigma}$
 and $p \bpi$ such that $\sigma^p(x) = x$.
Then, in the sequence $x$, every letter $a \in A_l$ appears in bounded gaps.
\end{lem}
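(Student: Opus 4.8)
The goal is to produce a constant $M$ such that every factor of $x_0$ of length at least $M$ contains $a$. The plan is to combine two facts with a block decomposition of $x_0$: (i) tameness alone forces the occurrences of $A_l$-letters to be syndetic in every element of $X_\sigma$, and (ii) $l$-primitivity lets us pick a power $\sigma^N$ that both fixes $x_0$ and sends every letter of $A_l$ to a word in which $a$ occurs.

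For (i) I would first show that there is a constant $B$ with $\abs{w} \le B$ for every $w \in \sL(\sigma)$ all of whose letters lie in $A_s$. Since $\sL(\sigma)$ is the union over $b \in A$ of the factor sets of the words $\sigma^n(b)$ $(n \bpi)$, it suffices to bound such $w$ inside each of these. For $b \in A_s$ the words $\sigma^n(b)$ are eventually periodic with uniformly bounded lengths by \cref{defn:As-const}, hence have only finitely many factors altogether. For $b \in A_l$, writing $\sigma^n(b)$ as in \cref{align:sigmana}, every maximal run of $A_s$-letters in $\sigma^n(b)$ is one of $l_s(n,b), r_s(n,b)$ --- of length at most $e_s$ by \cref{defn:As-bi-sided-limit-in-non-isolated-case} --- or one of the $w(n,b,i)$ --- of length at most $\gap(n) \le \lim_{m\to\infty}\gap(m) < \infty$ by \cref{lem:gap-by-As-bounded}. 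Taking $B$ to be the largest of these finitely many bounds and using $\sL(X_\sigma) \subseteq \sL(\sigma)$, one concludes that among any $B+1$ consecutive letters of any element of $X_\sigma$ at least one lies in $A_l$; in particular $\seb i \bi \mid x_0(i) \in A_l \sen$ meets every interval of $\Z$ of length $B+1$.

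For (ii), $l$-primitivity gives $n_0 \bpi$ with $\abs{\sigma^{n_0}(b)}_a \ge 1$ for all $b \in A_l$ (here we use that $a \in A_l$), and by \cref{rem:primitive-Al} the same holds with any larger exponent; fix $N := k_0 p \ge n_0$ for a suitable $k_0 \bpi$, so that also $\sigma^N(x_0) = x_0$, and set $D := \max_{c \in A}\abs{\sigma^N(c)}$, which is finite because $N$ is now fixed. The identity $\sigma^N(x_0) = x_0$ partitions $x_0$ into consecutive nonempty blocks $U_m := \sigma^N(x_0(m))$ $(m \bi)$, each of length at most $D$. Now take $i < j$ with $j - i \ge (B+3)D$. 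The blocks meeting $[i,j]$ form a consecutive run $U_m, \dotsc, U_{m'}$ whose union contains $[i,j]$, so $(m'-m+1)D \ge j - i + 1$, and hence $U_{m+1}, \dotsc, U_{m'-1}$ --- each contained in $[i,j]$ since $i \in U_m$ and $j \in U_{m'}$ --- number at least $B+1$. By (i), $x_0(m^*) \in A_l$ for some $m+1 \le m^* \le m'-1$, so the block $U_{m^*} = \sigma^N(x_0(m^*))$ contains $a$ and is a factor of $x_0[i,j]$. Thus $a$ occurs in $x_0$ with all gaps at most $(B+3)D$, which is the assertion.

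The only substantive step is (i): one has to notice that tameness is precisely the hypothesis excluding arbitrarily long all-$A_s$ factors, the quantitative part being already available from \cref{lem:gap-by-As-bounded}. Everything after that is bookkeeping, and the one point requiring care is that $N$ must be chosen and then held fixed --- as a multiple of $p$ that is at least $n_0$ --- before invoking the finiteness of $D$; with a growing power of $\sigma$ the blocks $U_m$ would be unboundedly long and the counting would collapse.
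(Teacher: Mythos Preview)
Your proof is correct and follows essentially the same strategy as the paper's: bound the $A_s$-only factors via \cref{lem:gap-by-As-bounded}, choose $N$ a multiple of $p$ large enough for $l$-primitivity to kick in, and use the block decomposition $x_0 = \sigma^N(x_0)$ to locate a full $\sigma^N(b)$-block with $b\in A_l$ inside any sufficiently long window. Your presentation is in fact slightly cleaner, since it uses only the relation $\sigma^N(x_0)=x_0$ and does not unwind the specific construction of $x_0$ from \cref{lem:exists-periodic-point-of-sigma} as the paper's argument does.
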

\begin{proof}
By assumption, there exists an $n \bpi$ such that
 $\abs{\sigma^n(b)}_a \ge 1$ for every $a, b \in A_l$.
By \cref{rem:primitive-Al}, for every $m \ge n$ and $a \in A_l$,
 it follows that $\Lett(\sigma^m(a)) \supseteq A_l$.
Because $\sigma^p(x) = x$, it follows that
 $\sigma^{kp}(x) = x$ for every $k \bpi$.
We take and fix $k \bpi$ such that $p' := kp \ge n$.
Let us define $l := \max_{a \in A}\abs{\sigma^{p'}(a)}$.
Because $\sigma^{p'}(x) = x$, we can write as 
 $x = \cdots \sigma^{p'}(x(-1)). \sigma^{p'}(x(0)) \sigma^{p'}(x(1)) \cdots$.
If $x(i) \in A_l$ ($i \bi$), then $\Lett(\sigma^{p'}(x(i))) \supseteq A_l$. 
On the other hand, by \cref{lem:gap-by-As-bounded}, we can obtain
 $d = \lim_{n \to \infty}\gap_s(n) < \infty$.
Thus, in every block of $x$ with length $d + 2l$, there exists
 an appearance of $\sigma^{p'}(a)$ for some $a \in A_l$.
Therefore, in every block of $x$ with length $d + 2l$, every letter in $A_l$
 appears.
This completes the proof.
\end{proof}

Here, we would like to show that if our substitution $\sigma$
 is tame and $l$-primitive, then conditions
 \cref{item:infinite-char,item:all-char,item:coincidence-with-infinite-system}
 are satisfied.
Let $\sigma : A \to A^+$ be our substitution that is tame and $l$-primitive.
Because we have imposed the condition $A_l \nekuu$ on our substitutions,
 condition \cref{item:infinite-char} is evidently satisfied.
Take an $e \in A_l$ arbitrarily.
By the definition of $l$-primitivity, we obtain
 $A_l \subseteq  \bigcup_{n \bpi} \Lett( \sigma^n(e))$.
By this, because we have also imposed the condition $A_\text{iso} \iskuu$,
 condition \cref{item:all-char} follows.
The next proposition shows that condition 
 \cref{item:coincidence-with-infinite-system} is satisfied.
It is evident from the definitions
 that $\sL(X_{\sigma}) \subseteq \sL(\sigma)$.
It is easy to see that if every letter in $A$ is legal,
 then, by applying $\sigma : X_{\sigma} \to X_{\sigma}$ successively,
 every word $\sigma^n(a)$ $(n \bpi)$ is legal.
Thus, if we can show that every letter in $A$ is legal,
 then we can obtain $\sL(X_{\sigma}) \supseteq \sL(\sigma)$.
\begin{prop}\label{prop:admissible}
Let $\sigma : A \to A^+$ be a tame and $l$-primitive substitution.
Then, it follows that every letter in $A$ is legal.
Particularly, it follows that $\sL(X_{\sigma}) = \sL(\sigma)$.
\end{prop}
\begin{proof}
Let $a \in A$.
We need to show that $a$ is legal.
By \cref{prop:exists-periodic-point-of-sigma},
 $\sigma : X_{\sigma} \to X_{\sigma}$ has a periodic point
 $x \in X_{\sigma}$ with a period $p$ such that $x(0) \in A_l$.
By $l$-primitivity, all of the letters in $A_l$ appear in $x$.
It follows that $A = \bigcup_{i = 0,1,\dotsc,p-1}\Lett(\sigma^i(x))
 \subset \sL(X_{\sigma})$.
Henceforth, all letters in $A$ are legal.
\end{proof}

\begin{rem}\label{rem:calculation-primitive-Al}
For every substitution $\sigma : A \to A^+$,
 we can calculate
 $F_n(a) := \Lett(\sigma^n(a)) \subseteq A$ for each $a \in A$ and $n \bpi$.
The sequence $F_n(a)$ is eventually periodic.
Therefore, we can stop calculations by finite steps.
A substitution $\sigma$ is $l$-primitive
 if and only if there exists an $n \bpi$ such that
 $F_n(a) \supseteq  A_l$ for all $a \in A_l$.
Thus, we can check $l$-primitivity by finite calculations.
\end{rem}

\begin{thm}\label{thm:primitive-with-non-isolated-implies-minimal}
Suppose that $\sigma : A \to A^+$ is a tame and $l$-primitive substitution.
Then, the substitution subshift $(X_{\sigma},T)$ is minimal.
\end{thm}
\begin{proof}
Let $x \in X_{\sigma}$ be a periodic point of
 $\sigma :X_{\sigma} \to X_{\sigma}$
 and $p \bpi$ be its period, i.e., $\sigma^p(x) = x$.
By \cref{prop:minimal-subshifts}, to show the minimality of
 $(X_{\sigma},T)$, we have to show that 
 each finite block $v \prec x$ appears in the bounded gaps in $x$.
Let $v \prec x$ be a finite block.
Because $\sL(X_{\sigma}) \subseteq \sL(\sigma)$ and $\sigma$ is $l$-primitive,
 there exists an $n > 0$ such that $v \prec \sigma^{np}(a)$.
By \cref{lem:every-letter-in-Al-in-bounded-gaps}, $a$ appears in bounded gaps.
It follows that $\sigma^{np}(a)$ appears in the bounded gaps in $x$.
Thus, $v$ appears in the bounded gaps in $x$.
By \cref{prop:minimal-subshifts}, it remains to show that
 the orbit of $x$ is dense in $X_{\sigma}$.
Let $y \in X_{\sigma}$ and $s < t$.
Then, it follows that $y[s,t] \in \sL(\sigma)$.
Now, as in the first part of this proof,
 it follows that $y[s,t]$ appears in $x$ infinitely
 many times (in bounded gaps).
\end{proof}

We consider the next condition on a substitution
 $\sigma : A \to A^+$:
\begin{align}\label{item:minimal-not-periodic}
 \text{The resulting } (X_{\sigma},T) \text{ is minimal and is not a single periodic orbit.}
\end{align}

By \cref{lem:isolated-periodic},
 if a substitution $\sigma$
 satisfies \cref{item:minimal-not-periodic},
 then $\sigma$ is tame.
Hereon, we are going to show that if a substitution $\sigma$ satisfies
 condition \cref{item:minimal-not-periodic}, then there exists
 a nonempty subset $B \subset A$ such that $\sigma(B) \subseteq B^+$
 and the restriction $\sigma|_B$ is also $l$-primitive (the last half
 of \cref{thm:main}).

\begin{defn}\label{defn:decompose-minimal-letters}
We define a relation on $A_{\min,l}$ by
 $a \simminl b$ $(a,b \in A_{\min,l})$ if $a \to b$.
\end{defn}
By the definition of $A_{\min,l}$, it follows that
 ($a \simminl b$ implies $b \simminl a$).
By definition, we obtain that ($a \simminl b$ and $b \simminl c$)
 implies $a \simminl c$.
For each $a \in A_{\min,l}$, there exists a $b \in A_{\min,l}$ with
 $a \to b$.
Therefore,
 we obtain $a \simminl a$ for every $(a \in A_{\min,l})$.
Thus, $\simminl$ is an equivalence relation.
We decompose $A_{\min,l} = A_{\min,l,1} \cup A_{\min,l,2} \cup \dotsb \cup A_{\min,l,r(\sigma)}$ owing to this equivalence relation.

\begin{lem}\label{lem:r=1}
If we assume condition \cref{item:minimal-not-periodic},
 then $r(\sigma) = 1$.
\end{lem}
\begin{proof}
Suppose that $r(\sigma) > 1$, and
 let $A' = A_{\min,l,i}$ and $A'' = A_{\min,l,j}$
 with $i \ne j$.
Let $b' \in  A'$ and $b'' \in A''$.
Let $B' := A' \cup \bigcup_{n \bpi}\Lett(\sigma^n(b'))$ and 
 $B'' := A'' \cup \bigcup_{n \bpi}\Lett(\sigma^n(b''))$.
Because $i \ne j$,
 it follows that $B' \cap B'' \cap A_l = \kuu$.
Furthermore, it follows that $\sigma(B') \subset B'^+$ and
 $\sigma(B'') \subset B''^+$.
Because $\sigma$ is tame,
 both $\sigma|_{B'}$ and $\sigma|_{B''}$ are tame.
It follows that both $X_{\sigma|_{B'}}$ and $X_{\sigma|_{B''}}$ are nonempty
 subsets of $X_{\sigma}$.
It also follows that, by \cref{lem:gap-by-As-bounded},
 at least one letter in $B' \cap A_l$ and in $B'' \cap A_l$ have to 
 appear in each element of $X_{\sigma|_{B'}}$ and $X_{\sigma|_{B''}}$.
Thus, we obtain $X_{\sigma|_{B'}} \cap X_{\sigma|_{B''}} \iskuu$,
 a contradiction.
\end{proof}

Owing to the above lemma, we assume hereon that $r(\sigma) = 1$.
Thus, we can assume that for each $a, b \in A_{\min,l}$,
 we obtain both $a \to b$ and $b \to a$.
We define $B := \bigcup_{a \in A_{\min,l}, n \bpi}\Lett(\sigma^n(a))$.
Note that because every $a \in A_{\min,l}$ satisfies $a \to a$,
 it follows that $B \supseteq A_{\min,l}$.
Further, it is easy to see that $B \cap A_l = A_{\min,l}$.
We write as $B_l := B \cap A_l = A_{\min,l}$ and $B_s := B \setminus B_l$.
As we have already seen in the proof of the above lemma,
 we can consider a substitution $\sigma|_B : B \to B^+$.
We denote $\sS(B_l) := \seb F \mid F \subseteq B_l \mywith F \nekuu\sen$.
Then, it follows that for all $F \in \sS(B_l)$, we can define
 $s(F) \in \sS(B_l)$
 by $s(F) := \bigcup_{a \in F}\left(\Lett(\sigma(a)) \cap B_l \right)$.
Henceforth, we can define a map $s :\sS(B_l) \to \sS(B_l)$.
We note that for each $a \in B_l$ and $n \bpi$, it follows that
 $s^n(\seb a \sen) = \Lett(\sigma^n(a)) \cap B_l$.
For each $a \in B_l$, the sequence
 $s^n(\seb a \sen)$ $(n \bpi)$ is eventually periodic.
Thus, we denote the least eventual period as $p(a)$.

\begin{lem}\label{lem:period-not-1-implies-not-minimal}
Suppose that \cref{item:minimal-not-periodic} is satisfied with $\sigma$.
Let $B$ be as above.
Then, for every $a \in B_l$, we obtain $p(a) = 1$.
\end{lem}
\begin{proof}
As we have noted just after condition \cref{item:minimal-not-periodic},
 $\sigma$ is tame.
Fix $a \in B_l$.
Suppose that $a \in B_l$ satisfies $p(a) > 1$.
There exists an $N(a) \bpi$ and an $F(a) \in \sS(B_l)$ such that
 $s^{N(a)}(\seb a \sen) = F(a)$ and the sequence
 $s^i(F(a))$ $(i = 0,1,2,\dotsc)$
 is periodic with the least period $p(a)$.
Because $p(a)$ is the least period,
 it follows that $s^i(F(a))$ $(0 \le i < p(a))$
 are mutually distinct.
We define $F := \bigcup_{n \bpi, b \in F(a)}\Lett(\sigma^{p(a)n}(b))$.
Then, it follows that $F \cap B_l = F(a)$.
It follows that $F \subset B$ and $s^{p(a)}(F) = F$.
Thus, the substitution $\tau := \sigma^{p(a)}|_F : F \to F^+$
 is well-defined.
Because $\sigma$ is tame, $\tau$ is also tame.
By \cref{prop:exists-periodic-point-of-sigma},
 we obtain a periodic point $x \in X_{\tau}$ under the action
 $\tau : X_{\tau} \to X_{\tau}$ such that
 $\Lett(x) = F(a)$.
In a similar manner, defining
 $F_1 := \bigcup_{n \bpi, b \in s(F(a))}\Lett(\sigma^{p(a)n}(b))$,
 we obtain $F_1 \cap B_l = s(F(a))$.
It follows that $F_1 \subset B$ and $s^{p(a)}(F_1) = F_1$.
Thus, the substitution $\tau_1 := \sigma^{p(a)}|_{F_1} : F_1 \to {F_1}^+$
 is well-defined and tame.
Similarly, we obtain a periodic point $x' \in X_{\tau_1}$ under the action
 $\tau_1 : X_{\tau_1} \to X_{\tau_1}$ such that
 $\Lett(x') = s(F(a))$.
Thus, it follows that $\Lett(x') \ne \Lett(x)$.
Thus, we can conclude that both $(X_{\tau},T)$ and $(X_{\tau_1},T)$
 are proper subshifts of $X_{\sigma}$, contradicting the minimality
 of $X_{\sigma}$.
\end{proof}

\begin{prop}\label{prop:minimal-not-periodic-implies-expression-by-primitive}
Let $\sigma : A \to A^+$ be a substitution.
Suppose that \cref{item:minimal-not-periodic} is satisfied.
Then, $\sigma$ is tame, and
 there exists a $B \subseteq A$ such that
 $B_l := B \cap A_l = A_{\min,l}$ and
 $\sigma|_B$ defines a substitution $\sigma|_B : B \to B^+$
 that is tame and $l$-primitive and
 $(X_{\sigma|_B},T) = (X_{\sigma},T)$.
\end{prop}
\begin{proof}
Let $\sigma : A \to A^+$ be a substitution
 that satisfies \cref{item:minimal-not-periodic}.
We have already shown that $\sigma$ is tame and defined
 $B \subseteq A$ such that $B_l := B \cap A_l = A_{\min,l}$.
Furthermore, $\sigma|_B$ defines a substitution $\sigma|_B : B \to B^+$.
The subshift $(X_{\sigma|_B},T)$ is a subshift of $(X_{\sigma},T)$.
Thus, condition \cref{item:minimal-not-periodic} implies that
 $(X_{\sigma|_B},T) = (X_{\sigma},T)$.
Henceforth, we only need to show the $l$-primitivity of $\sigma|_B$.
However, this is a direct consequence
 of \cref{lem:period-not-1-implies-not-minimal}.
This concludes the proof.
\end{proof}

\noindent \textit{Proof of \cref{thm:main}}. 
Let $\sigma : A \to A^+$ be a substitution.
Suppose that $\sigma$ is tame and $l$-primitive.
Then, by \cref{thm:primitive-with-non-isolated-implies-minimal},
 we obtain that $(X_{\sigma},T)$ is minimal.
Conversely,
 suppose that $(X_{\sigma},T)$ is minimal and is not a single periodic orbit.
Then, by \cref{prop:minimal-not-periodic-implies-expression-by-primitive},
 $\sigma$ is tame, and 
 there exists a $B \subseteq A$ and the restriction $\sigma|_B : B \to B^+$
 such that $\sigma|_B$ is tame and $l$-primitive
 and $(X_{\sigma},T) = (X_{\sigma|_B},T)$.
\qed

\vspace{4mm}

It is an easy exercise to show that 
 a subshift that is a single periodic orbit is a substitution minimal
 subshift generated by a primitive substitution.

\vspace{3mm}

\noindent \textit{Proof of \cref{thm:main2}}.
Suppose that $(X_{\sigma},T)$ is a substitution minimal subshift
 that is a single periodic orbit.
Then, by the above fact, it follows that $(X_{\sigma},T)$ is
 a substitution subshift that is generated by a substitution
 $\sigma : A \to A^+$ that is primitive---in particular,
 tame and $l$-primitive.
Suppose that $(X_{\sigma},T)$ is a substitution minimal subshift
 that is not a single periodic orbit.
Then, by the converse part of
 \cref{thm:main}, $\sigma$ is tame, and there exists a $B \subseteq A$
 and the restriction $\sigma|_B : B \to B^+$
 such that $\sigma|_B$ is tame and $l$-primitive
 and $(X_{\sigma},T) = (X_{\sigma|_B},T)$.
Thus, $(X_{\sigma},T)$ can be considered to be a substitution subshift
 generated by a tame and $l$-primitive substitution.
On the other hand, suppose that $\sigma$ is tame and $l$-primitive.
Then, by the first part of \cref{thm:main},
 it follows that $(X_{\sigma},T)$ is minimal.
\qed

\begin{cor}[{Part of {\cite[Theorem 1]{Damanik2006SubstitutionDynSysCharactOfLinearRepetAndApplications}}}]\label{cor:DL_my_way}
Let $\sigma : A \to A^+$ be a substitution.
Then, the substitution subshift $(X_{\sigma},T)$ is minimal if and only if
 it is linearly repetitive.
\end{cor}
\begin{proof}
By \cref{prop:minimal-subshifts}, linearly repetitive substitution subshifts
 are minimal.
We have to show the converse.
Let $\sigma : A \to A^+$ be a substitution such that 
 $(X_{\sigma},T)$ is minimal.
By \cref{thm:main2},
 we assume that $\sigma$ is tame and $l$-primitive.
Then, by \cref{prop:admissible},
 we can conclude that $\sL(X_{\sigma}) = \sL(\sigma)$.
Thus, we can apply 
 \cite[Theorem 1]{Damanik2006SubstitutionDynSysCharactOfLinearRepetAndApplications} itself to conclude that $(X_{\sigma},T)$ is linearly repetitive.
\end{proof}

\vspace{3mm}

As stated in the introduction,
 we could not give any examples of substitutions
 that generate minimal subshifts
 that cannot be generated directly by a primitive substitution.
Finally, we have not succeeded in providing a concrete
 algorithm for a given substitution to determine
 whether or not the substitution subshift is minimal.

\vspace{3mm}

\noindent
\textsc{Acknowledgments:}
We are most thankful for the many detailed suggestions
 provided by the anonymous referee(s) of
 our previous submission.
We would like to thank Editage (www.editage.jp)
 for providing English-language editing services.
This work was supported by JSPS KAKENHI Grant Number 16K05185.

\providecommand{\bysame}{\leavevmode\hbox to3em{\hrulefill}\thinspace}
\providecommand{\MR}{\relax\ifhmode\unskip\space\fi MR }
\providecommand{\MRhref}[2]{%
  \href{http://www.ams.org/mathscinet-getitem?mr=#1}{#2}
}
\providecommand{\href}[2]{#2}

\end{document}